\documentclass[12pt,reqno]{amsart}

\topmargin -0.1cm
\advance \topmargin by -\headheight
\advance \topmargin by -\headsep
\setlength{\paperheight}{270mm}%
\textheight 22.5cm
\oddsidemargin 1.5cm
\evensidemargin \oddsidemargin
\marginparwidth 1.25cm
\textwidth 14cm

\usepackage{amsmath}
\usepackage{amsfonts}
\usepackage{stmaryrd}
\usepackage{amssymb}
\usepackage{amsthm}
\usepackage{csquotes}

\usepackage{mathrsfs}
\usepackage{dsfont}
\usepackage{bm}

\numberwithin{equation}{section}

\newtheorem{theorem}{Theorem}[section]

\newtheorem{lemma}[theorem]{Lemma}
\newtheorem{Proposition}[theorem]{Proposition}
\newtheorem{Conjecture}[theorem]{Conjecture}

\title[Difference sets in higher dimensions]{Difference sets in higher dimensions}
\author[Akshat Mudgal]{Akshat Mudgal}
\subjclass[2010]{11B13, 11B30} 
% 11B13 - Additive bases, including sumsets
% 11B30 - Arithmetic combinatorics; higher degree uniformity
% 11P70  - Inverse problems of additive number theory, including sumsets
\keywords{Additive combinatorics, Difference sets}
\date{} 
\address{Department of Mathematics, Purdue University, 150 N. University Street, West Lafayette, IN 47907-2067, USA }
\address{School of Mathematics, University of Bristol, Fry Building, Woodland Road, Bristol, BS8 1UG, UK}
\email{am16393@bristol.ac.uk, amudgal@purdue.edu}

\begin{document}

\maketitle
\begin{abstract}
Let $d \geq 3$ be a natural number. We show that for all finite, non-empty sets $A \subseteq \mathbb{R}^d$ that are not contained in a translate of a hyperplane, we have
\[ |A-A| \geq (2d-2)|A| - O_d(|A|^{1- \delta}),\]
where $\delta >0$ is an absolute constant only depending on $d$. This improves upon an earlier result of Freiman, Heppes and Uhrin, and makes progress towards a conjecture of Stanchescu. 
\end{abstract}

\section{Introduction}
Let $A, B$ be finite subsets of $\mathbb{R}^d$, for some $d \in \mathbb{N}$. We define the sumset 
\[ A + B = \{ a + b \ | \ a \in A, b \in B \}, \]
and the difference set
\[ A- B = \{ a-b \ | \ a \in A, b \in B \}. \]
The problem of finding sharp estimates for $|A-A|$, in terms of $|A|$, has been studied by various authors (see \cite{FHU1989}, \cite{St1998}, \cite{St2001}). Moreover, such estimates have been applied to improve results in geometry of numbers, including the classical theorem of Minkowski-Blichfeld (see \cite{GH2016}, \cite{Uh1980}, \cite{Uh1981}). Thus, in this paper, we study lower bounds for $|A-A|$, when $A$ is a finite subset of $\mathbb{R}^d$. 
\par

We begin by defining the notion of \emph{dimension} for a set. Given a non-empty set $A \subseteq \mathbb{R}^d$, we define $\dim(A)$ to be the dimension of the affine subspace spanned by $A$. If $\dim(A) = k$, we write $A$ to be a $k$-dimensional set or equivalently, say that dimension of $A$ is $k$. With this notation in hand, we state our main result. 

\begin{theorem} \label{diff}
Let $d \geq 3$ be a natural number and $A \subseteq \mathbb{R}^d$ be a finite, non-empty set such that $\dim(A) = d$. Then we have
\[ |A-A| \geq (2d-2)|A| - O_d(|A|^{1-\delta}),\]
where $\delta >0$ is an absolute constant only depending on $d$. 
\end{theorem}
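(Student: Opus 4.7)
The natural approach is induction on $d$, with base case $d = 2$ supplied by Freiman's planar theorem $|A - A| \geq 3|A| - O(1)$ for non-collinear $A \subseteq \mathbb{R}^2$. For the inductive step with $\dim(A) = d \geq 3$, one picks a direction $v \in \mathbb{R}^d$ and projects along it via $\pi \colon \mathbb{R}^d \to \mathbb{R}^{d-1}$. Writing $B = \pi(A)$ and $A_b = \pi^{-1}(b) \cap A$, the set $A$ decomposes as a disjoint union of fibres $A_b$, each lying on an affine line parallel to $v$, with $|A| = \sum_{b \in B} |A_b|$.

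The crucial observation is that $\pi(A - A) = B - B$, and the fibre of $A - A$ above $c \in B - B$ equals $\bigcup_{b_1 - b_2 = c}(A_{b_1} - A_{b_2})$. Since $|A_{b_1} - A_{b_2}| \geq |A_{b_1}| + |A_{b_2}| - 1$ by the one-dimensional Freiman bound, one gets
\[ |A - A| \geq \sum_{c \in B - B}\, \max_{b_1 - b_2 = c}\,\bigl(|A_{b_1}| + |A_{b_2}| - 1\bigr). \]
Choosing $\pi$ so that $B$ is genuinely $(d-1)$-dimensional, the inductive hypothesis yields $|B - B| \geq (2d - 4)|B| - O_d(|B|^{1 - \delta'})$. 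A careful double count of the right-hand side, using a distinguished heavy fibre $A_{b_0}$ paired against all other fibres in both directions and exploiting the symmetry $A - A = -(A - A)$, should combine the $(2d - 4)|B|$ contribution from the induction with a $2|A|$ contribution from the one-dimensional fibres to produce the desired coefficient $(2d - 2)$.

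The principal obstacle is obtaining the power-saving error $O_d(|A|^{1 - \delta})$ in place of the cruder $O_d(|A|)$ that falls out of a naive argument; the loss arises from those $c \in B - B$ whose fibre in $A - A$ sits exactly at the trivial lower bound. The plan to attack this is via a dichotomy: if $A$ contains a direction $v$ along which some fibre is a substantial almost-arithmetic progression, a quantitative version of Freiman's theorem pins down the one-dimensional contribution sharply enough that a stability argument converts the structural information into polynomial savings; if no such direction exists, every projection has uniformly small fibres and one combines an averaged or randomised projection with Plünnecke--Ruzsa inequalities to extract the saving. A further delicate case is when no direction produces a truly $(d - 1)$-dimensional image, forcing $A$ to lie on a small number of parallel hyperplanes; this must be handled by slicing $A$ along those hyperplanes and invoking the inductive bound on each slice separately, which is also where I expect the value of $\delta$ to be constrained by the iteration of the dimensional descent.
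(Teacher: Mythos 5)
Your reduction $|A-A|\ \geq\ \sum_{c\in B-B}\max_{b_1-b_2=c}\bigl(|A_{b_1}|+|A_{b_2}|-1\bigr)$ is correct as far as it goes, but the sentence claiming that a ``careful double count'' combines the $(2d-4)|B|$ term with a $2|A|$ term to yield the coefficient $2d-2$ is precisely where the entire difficulty of the theorem lives, and nothing in the proposal substantiates it. Two concrete problems. First, in the extremal configurations (Stanchescu's examples, where $A$ is $2d-2$ translates of a long progression) the projection $B$ has bounded size, so the inductive bound $|B-B|\geq(2d-4)|B|-O_d(|B|^{1-\delta'})$ is vacuous and the whole coefficient $2d-2$ must be extracted from the way the fibres interact; your sum of maxima gives each $c\in B-B$ credit for only one pair of fibres, and there is no argument showing that every fibre gets counted at least $2d-2$ times across the various $c$. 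Second, when fibres are large the naive pairing of a heavy fibre against all others yields only about $2|A|+(2d-6)|B|\ll(2d-2)|A|$, while when fibres are small the $2|A|$ fibre contribution degenerates; the general case interpolates and is not handled. Supplying this count is essentially the content of the paper's Proposition \ref{lines} together with Lemma \ref{highplan}, which require a double induction on the dimension and the number of covering lines, an extremal-facet analysis of the convex hull of the projected set, and a separate estimate showing a high-dimensional difference set cannot concentrate in a low-dimensional subspace. (Also, your worry about directions in which the image fails to be $(d-1)$-dimensional is moot: if $\dim(A)=d$ then every projection along one direction has $(d-1)$-dimensional image.)

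The mechanism you propose for the power-saving error term is likewise not the one that works, and I do not see how to make your dichotomy (almost-progression fibre versus uniformly small fibres) produce a saving. The paper instead assumes $|A-A|\leq 8(d-1)|A|$ (else there is nothing to prove), converts this to small doubling via the Ruzsa triangle inequality (Lemma \ref{ruzcal}), and applies a structure theorem (Lemma \ref{fri}) to cover all but $O_d(|A|^{1-\sigma})$ of $A$ by $r\leq C_1|A|^{1-\sigma}$ parallel lines each carrying at least $|A|^{\sigma}$ points; the exceptional points are discarded at a cost of $O_d(|A|^{1-\sigma})$, and the main term then follows from a bound of the form $(2d-2)|A|-K_d r$ for sets on $r$ parallel lines, so the power saving comes entirely from the bound on $r$. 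Without some such structural input, the error analysis in your sketch has no clear path to $O_d(|A|^{1-\delta})$.
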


When $d \geq 4$, the previously best known estimate on this problem was a result of Freiman, Heppes and Uhrin \cite{FHU1989}, who showed that
\[ |A-A| \geq (d+1)|A| - d(d+1)/2, \] 
for all non-empty, finite subsets $A$ of $\mathbb{R}^d$ such that $\dim(A) = d$. Thus for large values of $d$, we improve the previously best known lower bound almost by a factor of $2$. 
\par

As for the corresponding best known examples, when $d \geq 3$, Stanchescu \cite{St2001} constructed a sequence of $d$-dimensional sets $A_i$ such that $|A_i| \to \infty$ as $i \to \infty$ and
\begin{equation} \label{stanex}
 |A_i - A_i| = \bigg(2d - 2 + \frac{1}{d-1} \bigg)|A_i| - O_d(1), \ \text{for all} \ i \in \mathbb{N}.
 \end{equation} 
We provide a brief outline of this construction. We use $\{e_1, \dots, e_d\}$ to denote the standard basis for $\mathbb{Z}^d$, and we set $e_0 = (0, \dots, 0) \in \mathbb{Z}^d$. Furthermore, we define the set $T$ as 
\[ T = \{ e_0, e_1, \dots, e_{d-2} \}. \]
Next, for each natural number $i$, we write
\[ a_i = e_d - i e_{d-1}, \ \text{and} \ p_{i} = \{e_0, e_{d-1}, 2e_{d-1}, \dots, (i-1) e_{d-1}\}. \]
With this notation in hand, we let
\[ A_i = (T \cup (a_i - T)) + p_i . \]
We note that $A_i$ can be written as a union of $2d-2$ translates of the set $p_i$. Moreover, since $\dim(T) = d-2$, we see that $A_i$ is contained in two parallel hyperplanes. 
\par

Stanchescu \cite[Conjecture 1.2]{St2001} conjectured this example to be optimal. We state this conjecture below. 

\begin{Conjecture}
Let $d \geq 4$ be a natural number and $A \subseteq \mathbb{R}^d$ be a finite, non-empty set such that $\dim(A) = d$. Then we have
\[|A-A| \geq \bigg(2d - 2 + \frac{1}{d-1} \bigg)|A| - O_d(1). \] 
\end{Conjecture}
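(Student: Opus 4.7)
The plan is induction on $d$, sharpening every estimate in the slicing strategy underlying Theorem~\ref{diff}. Fix $d \geq 4$ and assume the conjecture in dimensions $3, \dots, d-1$. Choose a hyperplane $H$ spanned by $d$ affinely independent points of $A$, so that $A_1 := A \cap H$ is $(d-1)$-dimensional, let $\vec v$ be perpendicular to $H$, and write $A = A_1 \cup \cdots \cup A_m$ for the slices of $A$ in direction $\vec v$ at heights $h_1 < \cdots < h_m$. Identify each $A_i$ with its projection $B_i \subseteq \mathbb{R}^{d-1}$, and decompose $A - A$ by height as $|A-A| = \sum_k |L_k|$, where $L_k = \bigcup_{h_i - h_j = k}(B_i - B_j)$.

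When $m \geq 3$, the $2(m-1)$ non-zero height layers are pairwise disjoint, each contributing at least $|B_1| + |B_i| - 1$ by the torsion-free sumset bound. Combined with the inductive lower bound on $L_0 \supseteq B_1 - B_1$, this gives $|A-A| \geq (2d + 2m - 8 + 1/(d-2))|B_1| + 2|A| - O_d(1)$. Getting this past the target $(2d - 2 + 1/(d-1))|A|$ requires $|B_1|$ to be a sufficiently large fraction of $|A|$, else one re-selects the reference slice or exploits multiple $(d-1)$-dimensional slices in the zero-height layer. The critical case is therefore $m = 2$, precisely Stanchescu's extremal configuration, with $|A-A| = |(B_1 - B_1) \cup (B_2 - B_2)| + 2|B_1 - B_2|$. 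Here the conjectural constant decomposes into two sharp sub-inequalities saturated by Stanchescu's example: (i) $|(B_1 - B_1) \cup (B_2 - B_2)| \geq (2d - 4 + 2/(d-1))|B_1| - O_d(1)$ in the balanced regime $|B_1| \approx |B_2|$, and (ii) $|B_1 - B_2| \geq d \,|B_1| - O_d(1)$ when $|B_1| \geq |B_2|$ and $\dim(B_1 \cup B_2) = d - 1$. Summing them yields exactly $(2d - 2 + 1/(d-1))|A| - O_d(1)$, both being realised at equality by the cylindrical slices $B_i \approx T + p_i$ of Stanchescu's construction.

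Estimate (ii) should follow from a sharp Freiman-type two-set sumset bound in $\mathbb{R}^{d-1}$. The main obstacle is estimate (i): since $2/(d-1) > 1/(d-2)$ for $d \geq 4$, it is strictly stronger than what the inductive conjecture delivers for $B_1$, the gap being realised because Stanchescu's slices are cylinders $T + p$ rather than lower-dimensional Stanchescu sets. Closing it would seem to require a robust stability theorem asserting that a $(d-1)$-dimensional set whose difference set approaches the conjectural minimum must essentially have cylindrical form, so that (i) follows from the cylinder identity $|(T + p) - (T + p)| = |T - T| \cdot |p - p|$. Establishing such an inverse theorem for the conjecture in dimension $d - 1$ appears to be the essential hurdle.
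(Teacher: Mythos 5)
You should first note that the statement you are attempting is Stanchescu's conjecture, which this paper does not prove and which remains open: the paper's main result (Theorem~\ref{diff}) deliberately stops at the weaker constant $2d-2$, and the introduction states explicitly that Theorem~\ref{diff} is ``within a factor of $1/(d-1)$ of the conjectured lower bound.'' So there is no proof in the paper to compare against, and your proposal must stand on its own. It does not: by your own admission, the decisive step --- estimate (i), the inequality $|(B_1-B_1)\cup(B_2-B_2)| \geq (2d-4+2/(d-1))|B_1| - O_d(1)$ --- is not established. You correctly observe that the inductive hypothesis in dimension $d-1$ only yields the constant $2d-4+1/(d-2) < 2d-4+2/(d-1)$, and that bridging this gap would require an inverse (stability) theorem forcing near-extremal $(d-1)$-dimensional sets to be cylinders. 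No such inverse theorem exists in the literature or in this paper, and identifying the hurdle is not the same as clearing it. A proposal whose central lemma is ``a robust stability theorem \dots\ appears to be the essential hurdle'' is a research programme, not a proof.

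There are further concrete problems even granting the reduction. Estimate (ii), $|B_1 - B_2| \geq d|B_1| - O_d(1)$ under $|B_1| \geq |B_2|$ and $\dim(B_1\cup B_2)=d-1$, is false in the unbalanced regime: Ruzsa's bound (Lemma~\ref{ruzsa}) gives only $|B_1| + (d-1)|B_2| - O_d(1)$, which is far below $d|B_1|$ when $|B_2| = o(|B_1|)$, and this is sharp (take $B_2$ a single point, so $|B_1-B_2|=|B_1|$). Thus your two sub-inequalities only sum to the target when the two slices are essentially equal in size, and the unbalanced two-slice case --- which cannot be excluded --- is untreated. Likewise, in the $m\geq 3$ case the phrase ``one re-selects the reference slice or exploits multiple $(d-1)$-dimensional slices'' papers over the genuinely hard configurations: there is no guarantee that any hyperplane slice of $A$ is simultaneously $(d-1)$-dimensional and a positive proportion of $A$ (indeed Stanchescu's extremal sets are unions of many small parallel lines), which is precisely why the paper routes its argument through Lemma~\ref{fri}, coverings by parallel lines, and the quotient analysis of Lemma~\ref{highplan} rather than through a single hyperplane decomposition. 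As it stands the proposal proves nothing beyond what is already implied by the inductive hypothesis plus Ruzsa's theorem, which is known to fall short of the conjectured constant.
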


Thus, when $d \geq 4$, Theorem $\ref{diff}$ is within a factor of $1/(d-1)$ of the conjectured lower bound. 

\par

We remark that for smaller values of $d$, that is, when $d \in \{1,2,3 \}$, stronger results are known about lower bounds for $|A-A|$. In particular, if $A \subseteq \mathbb{R}$, we know that 
\[ |A-A| \geq 2|A|- 1, \]
and when $A \subseteq \mathbb{R}^2$ and $\dim(A) = 2$, work of Freiman, Heppes and Uhrin \cite{FHU1989} implies that
\[ |A-A| \geq 3|A|- 3. \]
When $d=3$, Stanchescu \cite{St1998} showed that 
\[ |A-A| \geq 4.5|A| - 9, \] 
for all non-empty, finite $A \subseteq \mathbb{R}^3$ such that $\dim(A) = 3$. Moreover, these inequalities can not be further strengthened. 
\par

We note that multiple variants of this problem have been studied in higher dimensions as well. In particular, one may consider lower bounds for cardinalities of sets of the from $A + \mathscr{L}(A)$, where $A \subseteq \mathbb{R}^d$ with $d \geq 2$, and $\mathscr{L}$ is some invertible linear transformation. The case when $\mathscr{L}$ is a particular type of integer dilation was studied by Balog and Shakan in \cite{BS2015}, which was then sharpened and generalised by the author in \cite{Ak2019}. Furthermore, the case of $\mathscr{L}$ being a rotation matrix in $\mathbb{R}^2$ was considered in \cite{Ak2019}, and we refer the reader to this paper for a more detailed exposition on this problem. 
\par

We now outline the structure of our paper. We dedicate \S2 to present some preliminary results that we will use in our paper. In \S3  we will reduce the proof of Theorem $\ref{diff}$ to considering the cases when $A$ can be covered by a small collection of parallel lines. We use \S4 to work on Proposition $\ref{lines}$ which handles the aforementioned reduced cases. 

\par

{\bf Acknowledgements.} The author's work is supported by a studentship sponsored by a European Research Council Advanced Grant under the European Union's Horizon 2020 research and innovation programme via grant agreement No.~695223. The author would like to thank Trevor Wooley for his guidance and direction, and the anonymous referee for many helpful comments.

%---------------------------------------------------------------------------------------------------------------------------
%---------------------------------------------------------------------------------------------------------------------------
%---------------------------------------------------------------------------------------------------------------------------
%---------------------------------------------------------------------------------------------------------------------------
%---------------------------------------------------------------------------------------------------------------------------
%---------------------------------------------------------------------------------------------------------------------------

\section{Preliminaries}

In our proof of Theorem $\ref{diff}$, we will use a standard inequality to move from difference sets to sumsets. We mention this result as stated in \cite[Corollary 2.12]{TV2006}.

\begin{lemma} \label{ruzcal}
Suppose that $U,V$ are two finite, non-empty sets in some abelian group $G$. Then 
\[ |U+V| \leq \frac{|U-V|^3}{|U||V|}.  \]
\end{lemma}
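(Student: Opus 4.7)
The plan is to reduce Lemma~\ref{ruzcal} to two standard ingredients from the Pl\"unnecke--Ruzsa calculus: an elementary injection bounding $|U+V|\cdot|V|$ by $|U-V|\cdot|V+V|$, followed by the Pl\"unnecke--Ruzsa inequality bounding $|V+V|$ in terms of $|U-V|$ and $|U|$.

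For the first step, I would fix for each $w\in U+V$ once and for all a representation $w=u_w+v_w$ with $u_w\in U$ and $v_w\in V$, and consider the map
\[ \phi\colon (U+V)\times V \longrightarrow (U-V)\times (V+V), \qquad \phi(w,v)=(u_w-v,\; v_w+v). \]
Both coordinates visibly lie in the target sets, and $\phi$ is injective: from the image $(a,b)$ one recovers $w=a+b=u_w+v_w$, which determines $u_w$ and $v_w$ via the fixed choice, and hence recovers $v=b-v_w$. This immediately gives $|U+V|\cdot|V|\leq |U-V|\cdot|V+V|$.

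For the second step, I would invoke the Pl\"unnecke--Ruzsa inequality in the following form: for finite non-empty $A, B$ in an abelian group, setting $K=|A-B|/|A|$, one has $|B+B|\leq K^{2}|A|$. (This is the $n=2$, $m=0$ instance of the standard bound $|nB-mB|\leq K^{n+m}|A|$ after replacing $B$ by $-B$.) Applying it with $A=U$ and $B=V$ yields $|V+V|\leq |U-V|^{2}/|U|$.

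Substituting this into the inequality from the first step and dividing through by $|V|$ gives
\[ |U+V| \;\leq\; \frac{|U-V|\cdot|V+V|}{|V|} \;\leq\; \frac{|U-V|^{3}}{|U|\cdot|V|}, \]
which is exactly the claimed bound. The only non-elementary ingredient is the Pl\"unnecke--Ruzsa inequality itself, so I expect no serious obstacle beyond invoking it; indeed the paper will most likely just cite this bound from Tao--Vu \cite{TV2006}, which is already the reference quoted for the lemma statement.
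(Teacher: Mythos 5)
Your argument is correct: the injection $\phi(w,v)=(u_w-v,\,v_w+v)$ is well defined into $(U-V)\times(V+V)$ and injective for exactly the reason you give (the image determines $w=a+b$, hence $u_w,v_w$, hence $v$), so $|U+V|\,|V|\le|U-V|\,|V+V|$; and the Pl\"unnecke--Ruzsa bound $|V+V|\le|U-V|^2/|U|$ (the $n=2$, $m=0$ case after replacing $V$ by $-V$) then yields the claimed inequality. Your guess about the paper is also right: it offers no proof at all, simply citing the statement as Corollary~2.12 of Tao--Vu. The only point worth flagging is that your route leans on the Pl\"unnecke--Ruzsa inequality, which is a substantially deeper tool than this lemma requires; the standard proof (the one behind the cited corollary, due to Ruzsa) is entirely elementary, proceeding by a single explicit injection of $(U+V)\times U\times V$ into $(U-V)^3$ built from a fixed representation $w=u_w+v_w$, with no graph-theoretic or Petridis-style input. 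So your proof is valid but not self-contained in the way the classical argument is; if you want to avoid invoking Pl\"unnecke--Ruzsa, replacing your second step by that direct three-fold injection gives the same conclusion with only the bookkeeping you already used in your first step.
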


With sumset estimates in hand, we will then use a structure theorem to cover our set with a small collection of parallel lines. We state this result as shown in \cite[Theorem 1.5]{Ak2019}. 

\begin{lemma} \label{fri}
Let $A$ be a finite subset of $\mathbb{R}^d$ with $|A| = n$ where $n$ is large enough. If
\[ |A+A| \leq cn, \]
for some $c > 0$, then there exist parallel lines $l_1, l_2, \dots, l_r$ in $\mathbb{R}^d$, and constants $0 < \sigma \leq 1/2$ and $C_1 > 0$ depending only on $c$ such that 
\[ |A \cap l_1| \geq \dots \geq  |A \cap l_r| \geq  |A \cap l_1|^{1/2} \geq C_1^{-1} n^{\sigma}. \]
and 
\[ |A\setminus (l_1 \cup l_2 \cup \dots \cup l_r)| < C_1 c n^{1-\sigma}. \] 
\end{lemma}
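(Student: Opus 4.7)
The plan is to apply Freiman's theorem for torsion-free abelian groups to locate a proper generalised arithmetic progression (GAP) containing $A$, and then use the longest side of this GAP as the common direction of the lines $l_1,\dots,l_r$. Concretely, the hypothesis $|A+A|\le cn$ implies, by the Freiman--Ruzsa theorem, that $A$ is contained in a proper GAP
\[
P \;=\; \{a_0 + m_1 v_1 + \cdots + m_k v_k : 0 \le m_i < N_i \text{ for each } 1\le i\le k\},
\]
whose rank $k=k(c)$ and size $|P|=N_1\cdots N_k \le C(c)n$ depend only on $c$. After relabelling so that $N_1 \ge N_2 \ge \cdots \ge N_k$, the longest side satisfies $N_1 \ge (n/C(c))^{1/k}$, which is polynomially large in $n$.

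Next I let $\mathcal{L}$ denote the finite family of lines in $\mathbb{R}^d$ parallel to $v_1$ that meet $A$. Since $P$ decomposes into exactly $N_2\cdots N_k = |P|/N_1$ parallel lines in direction $v_1$, we have $|\mathcal{L}| \le |P|/N_1 \le C(c)n/N_1$. Ordering $\mathcal{L}$ so that $|A\cap l_1| \ge |A\cap l_2|\ge \cdots$, pigeonholing gives
\[
|A \cap l_1| \;\ge\; n/|\mathcal{L}| \;\ge\; N_1/C(c) \;\ge\; C(c)^{-1-1/k}\, n^{1/k}.
\]
Setting $\sigma = 1/(2k) \le 1/2$ then yields $|A\cap l_1|^{1/2} \ge C_1^{-1}n^\sigma$ for a suitable $C_1=C_1(c)$. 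Letting $r$ be the largest index with $|A\cap l_r| \ge |A\cap l_1|^{1/2}$, the first $r$ lines in the sorted list form the required collection and satisfy the chain of inequalities in the statement.

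Finally, each line in $\mathcal{L}\setminus\{l_1,\dots,l_r\}$ contains strictly fewer than $|A\cap l_1|^{1/2} \le N_1^{1/2}$ points of $A$, so
\[
|A\setminus(l_1\cup\cdots\cup l_r)| \;\le\; |\mathcal{L}|\cdot N_1^{1/2} \;\le\; (C(c)n/N_1)\cdot N_1^{1/2} \;=\; C(c)\,n\,N_1^{-1/2} \;\le\; C(c)\,n^{1-\sigma},
\]
and absorbing the factor $c$ into an enlarged $C_1$ gives the stated residual bound. The main obstacle lies in the quantitative input of Step~1: I need the rank $k$ of the GAP to depend only on $c$ and not on $n$, so that $\sigma = 1/(2k)$ is a strictly positive constant determined by $c$ alone. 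This is precisely what the Freiman--Ruzsa theorem in torsion-free abelian groups provides, with $k$ bounded by an explicit function of $c$; everything else in the argument is clean pigeonholing once this structural input is secured.
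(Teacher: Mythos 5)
The paper itself does not prove this lemma: it is imported verbatim from \cite[Theorem 1.5]{Ak2019}, so your argument has to stand on its own. Your overall route --- Freiman--Ruzsa in a torsion-free group, then projecting along the longest side of the progression --- is the natural one, and the first half of your argument is correct: each line of $\mathcal{L}$ contains a point $a_0+\sum_{i\ge 2}m_iv_i$ of $P$, so $|\mathcal{L}|\le N_2\cdots N_k\le C(c)n/N_1$, and the pigeonhole lower bound $|A\cap l_1|\ge N_1/C(c)\ge C_1^{-2}n^{2\sigma}$ with $\sigma=1/(2k)$ goes through.

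The gap is in the residual estimate, at the step $|A\cap l_1|^{1/2}\le N_1^{1/2}$. Properness of $P$ only says that the $N_1\cdots N_k$ sums $a_0+\sum_i m_iv_i$ are pairwise distinct; it does \emph{not} make $v_1,\dots,v_k$ linearly independent as vectors of $\mathbb{R}^d$, and they cannot be independent whenever the Freiman rank $k(c)$ exceeds $d$ (which happens for moderately large $c$). If, say, $v_2$ is parallel to $v_1$ --- e.g. $P=\{m_1(1,0)+m_2(N_1,0)+m_3(0,1)\}$ in $\mathbb{R}^2$, which is a perfectly proper GAP --- then a single line in direction $v_1$ meets $P$ in $N_1N_2$ points, so $|A\cap l_1|$ can greatly exceed $N_1$, your threshold $|A\cap l_1|^{1/2}$ can greatly exceed $N_1^{1/2}$, and the chain $|A\setminus(l_1\cup\dots\cup l_r)|\le |\mathcal{L}|\,N_1^{1/2}\le C(c)nN_1^{-1/2}$ collapses. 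Fortunately the fix is short and uses the doubling hypothesis directly instead of the GAP: for each $\ell\in\mathcal{L}$ fix $a_\ell\in A\cap\ell$; the sets $(A\cap l_1)+a_\ell$ lie on pairwise distinct lines parallel to $v_1$, hence are pairwise disjoint subsets of $A+A$, giving $|\mathcal{L}|\cdot|A\cap l_1|\le |A+A|\le cn$, i.e. $|\mathcal{L}|\le cn/|A\cap l_1|$. The residual is then at most $|\mathcal{L}|\cdot|A\cap l_1|^{1/2}\le cn\,|A\cap l_1|^{-1/2}\le C_1cn^{1-\sigma}$, which is exactly the stated bound. With that substitution your proof is complete; as written, the residual bound rests on a false inequality.
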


We remark that while we will use Lemma $\ref{fri}$ to cover $A$ with translates of a one dimensional subspace, our strategy will be to find an affine hyperplane which has a large intersection with $A$ and consequently, cover $A$ with translates of this hyperplane. The core of the proof lies in analysing the interactions between extremal translates of this hyperplane. We will proceed with this in \S4. Another key ingredient in our proof will be a result of Ruzsa \cite[Corollary 1.1]{Ru1994} on sumsets of $d$-dimensional sets. 

\begin{lemma} \label{ruzsa}  
Let $A, B$ be finite, non-empty subsets of $\mathbb{R}^d$ such that $|A| \geq |B|$ and $\dim(A+B) = d$. Then we have
\[ |A+B| \geq |A| + d|B| - d(d+1)/2.\]
\end{lemma}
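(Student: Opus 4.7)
My plan is to prove Lemma~\ref{ruzsa} by double induction: an outer induction on the dimension $d$ and an inner induction on $|B|$. The base cases fall out easily. When $d=1$, the statement reduces to the classical one-dimensional sumset bound $|A+B|\geq|A|+|B|-1$, which is immediate by ordering both sets on the real line. When $|B|=1$, the hypothesis $\dim(A+B)=d$ forces $\dim(A)=d$, so $|A|\geq d+1$, and the trivial equality $|A+B|=|A|$ already exceeds $|A|+d-d(d+1)/2 = |A|-d(d-1)/2$.

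For the inductive step (so $d\geq 2$ and $|B|\geq 2$) I would select $b\in B$ to be a vertex of $\mathrm{conv}(B)$ with outward supporting-hyperplane normal $u$, and set $B'=B\setminus\{b\}$. I then split on the value of $\dim(A+B')$. If $\dim(A+B')=d-1$, then $A$ and $B'$ both lie in translates of a common hyperplane, while $b$ is forced off this arrangement (otherwise $\dim(A+B)$ would also be $d-1$), so $A+b$ and $A+B'$ sit in disjoint parallel hyperplanes and $|A+B|=|A|+|A+B'|$. The outer induction applied to $A,B'$ in dimension $d-1$ gives $|A+B'|\geq|A|+(d-1)|B'|-(d-1)d/2$, and a short calculation using $|A|\geq|B|$ confirms that $2|A|+(d-1)(|B|-1)-(d-1)d/2 \geq |A|+d|B|-d(d+1)/2$. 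In the remaining case $\dim(A+B')=d$, the inner induction gives $|A+B'|\geq|A|+d(|B|-1)-d(d+1)/2$, so it suffices to show that $|(A+b)\setminus(A+B')|\geq d$.

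For this last claim, the dual cone $C=\{u':\langle u',b-b'\rangle>0\text{ for all }b'\in B'\}$ is open and $d$-dimensional, since the vectors $b-b'$ all lie in the open half-space $\{\langle u,\cdot\rangle>0\}$. Any $a\in A$ that maximises some direction $u'\in C$ over $A$ yields a new element $a+b\notin A+B'$, because $a+(b-b')$ has strictly larger $u'$-value than any point of $A$ and therefore cannot lie in $A$. The main obstacle is guaranteeing at least $d$ such distinct extremal vertices of $\mathrm{conv}(A)$: for a narrow $C$, only one vertex of $\mathrm{conv}(A)$ may be visible, and small examples show one can indeed recover fewer than $d$ new elements directly. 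I would overcome this by strengthening the inductive statement to track the slack $|A+B'|-(|A|+d|B'|-d(d+1)/2)$, showing that whenever the extremal-vertex argument produces fewer than $d$ new elements, the extra slack carried from $A+B'$ makes up the deficit; setting up this slack-versus-newness bookkeeping, together with a careful choice of the vertex $b$, is the technical heart of the proof.
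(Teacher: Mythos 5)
First, a point of orientation: the paper does not prove this lemma at all --- it is quoted verbatim from Ruzsa \cite{Ru1994}, so there is no ``paper proof'' to compare against, and you are in effect attempting to reprove a known theorem of Ruzsa from scratch.

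Your base cases and the branch $\dim(A+B')=d-1$ are fine (that branch reduces, after the computation you indicate, to $|A|\geq |B|-1$). The genuine gap is exactly where you flag it, and it is not a removable technicality: in the main branch you need $|(A+b)\setminus(A+B')|\geq d$, and this is simply false. Take $A=B=\{0,e_1,e_2\}\subseteq\mathbb{R}^2$; for \emph{every} choice of vertex $b$ of $\mathrm{conv}(B)$ one finds exactly one new element of $A+b$ outside $A+B'$, while the theorem is tight here ($|A+A|=6=|A|+2|A|-3$). So no ``careful choice of the vertex $b$'' rescues the naive count, and the deficit must be recovered from the slack in $|A+B'|$ --- which in this example is exactly $1$, precisely enough. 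This means the ``slack-versus-newness bookkeeping'' you defer is not bookkeeping at all: it is the entire content of the theorem. You have not stated what the strengthened inductive hypothesis is, nor verified that the slack you would need to carry is actually produced by the previous step and survives both branches of your case analysis (in particular the dimension-drop branch, where the error term changes from $d(d+1)/2$ to $(d-1)d/2$ and the slack ledger has to be re-based). As written, the proposal identifies the correct obstruction but does not overcome it, so it does not constitute a proof; if you want to pursue this route you must formulate and prove the strengthened statement explicitly, or else follow Ruzsa's original argument, which is organised around induction on the dimension rather than removal of a single point of $B$.
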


Furthermore, in some instances, we will use a standard inequality for sumsets, that is, for any finite, non-empty $A, B \subseteq \mathbb{R}^d$, we have
\begin{equation} \label{triv}
 |A+B| \geq |A| + |B| - 1. 
 \end{equation}
 
We use the rest of this section to prove a preliminary lemma. Our set up is as follows. Let $d \geq 4$ and $m \geq 2$ be natural numbers. Let $\mathcal{R}$ be a subspace of $\mathbb{R}^d$ such that $1 \leq \dim\mathcal{R} \leq d-1$. Let $Z$ be a finite, non-empty subset of $\mathbb{R}^{d}$ such that $Z$ is contained in $m$ cosets of $\mathcal{R}$. Thus, let $U = \{ z_1, \dots, z_m\}$ satisfy
\[ Z \subseteq U + \mathcal{R}. \]
After a suitable linear transformation, we can consider $U$ as a subset of $\mathbb{R}^{d'}$, where $d' \geq 1$ is some natural number such that $\mathbb{R}^{d'}$ is isomorphic to $\mathbb{R}^d / \mathcal{R}$. Moreover, we use the natural number $u$ to denote $\dim(U)$. Upon another suitable affine linear transformation, we can assume that $U \subseteq \mathbb{R}^u$. We will use $\psi : \mathbb{R}^d \to \mathbb{R}^u$ to denote the composition of the projection map from $\mathbb{R}^d$ to $\mathbb{R}^d/\mathcal{R}$, and the aforementioned affine linear transformations. With this notation in hand, we state our preliminary lemma.

\begin{lemma} \label{highplan}
If $u +1 \leq m \leq 2u$, then
\[ |(Z-Z) \setminus \mathcal{R}| \geq 2(2u + 1- m)|Z| - 16(u^2 + m^2). \]
\end{lemma}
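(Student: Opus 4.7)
My plan is to exploit the projection $\psi$ to decompose $(Z - Z) \setminus \mathcal{R}$ into its fibres over $(U - U) \setminus \{0\}$, and to estimate each fibre using the trivial sumset inequality. Writing $Z_i = Z \cap (z_i + \mathcal{R})$ for each $i \in \{1, \dots, m\}$, one has the partition $Z = \bigsqcup_i Z_i$ and
\[
|(Z - Z) \setminus \mathcal{R}| = \sum_{v \in (U - U) \setminus \{0\}} \bigl| \psi^{-1}(v) \cap (Z - Z) \bigr|,
\]
since $z - z' \in \mathcal{R}$ if and only if $\psi(z) = \psi(z')$. For every representation $v = u_i - u_j$ with $i \neq j$, the contribution of the fibre over $v$ is at least $|Z_i - Z_j| \geq |Z_i| + |Z_j| - 1$ by \eqref{triv}.

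The central step will be a combinatorial assignment: for each $v \in (U - U) \setminus \{0\}$, choose one ordered pair $(i_v, j_v)$ with $u_{i_v} - u_{j_v} = v$ and $i_v \neq j_v$, such that each index $i \in \{1, \dots, m\}$ appears (as $i_v$ or $j_v$) for at least $2(2u + 1 - m)$ values of $v$. Granting this, summing the fibre bounds yields
\[
|(Z - Z) \setminus \mathcal{R}| \geq \sum_{i=1}^m \deg(i)\,|Z_i| - |(U - U) \setminus \{0\}| \geq 2(2u + 1 - m)\,|Z| - m(m - 1),
\]
which is stronger than the claimed bound after absorbing $m(m - 1) \leq 16(u^2 + m^2)$.

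I would establish the combinatorial assignment by induction on $m$. In the base case $m = u + 1$, $U$ is affinely independent, so distinct ordered pairs $(i,j) \neq (k,l)$ give distinct differences (otherwise $u_i + u_l = u_j + u_k$ is a non-trivial affine relation); hence every non-zero $v \in U - U$ has a unique representation, the assignment is forced, and each index appears in exactly $2(m - 1) = 2u = 2(2u + 1 - m)$ pairs. For the inductive step with $m \geq u + 2$, pick $u_m \in U$ so that $U' := U \setminus \{u_m\}$ still has dimension $u$ (such $u_m$ exists since $m \geq u + 2$), apply induction to $U'$ (noting $u + 1 \leq m - 1 \leq 2u - 1$) to obtain a valid assignment on $(U' - U') \setminus \{0\}$ with minimum degree at least $2(2u + 2 - m)$, and extend it to $(U - U) \setminus \{0\}$ by assigning the new differences $v \in (U - U) \setminus (U' - U')$---which all involve $u_m$---to representations featuring $u_m$. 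Since this extension can only raise the degrees of indices in $U'$, each such index retains degree at least $2(2u + 2 - m) \geq 2(2u + 1 - m)$.

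The main obstacle is ensuring that enough of these new differences exist, so that $u_m$ itself achieves degree at least $2(2u + 1 - m)$. Choosing $u_m$ as an extremal vertex of the convex hull of $U$ with respect to a generic linear functional on $\mathbb{R}^u$ guarantees that $u_m - u_{j^\ast}$ and $u_{j^\ast} - u_m$ are new, where $u_{j^\ast}$ is the opposite extremal vertex; to reach the full count $2(2u + 1 - m)$ one should leverage multiple such extremal directions, using the hypothesis $m \leq 2u$ to ensure the required degree stays below the total $2(m-1)$ of differences touching $u_m$. The condition $m \leq 2u$ is thus essential: when $m > 2u$, the target degree $2(2u + 1 - m)$ becomes non-positive and the statement is vacuous.
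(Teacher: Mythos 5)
Your fibre decomposition is sound: for $v\neq 0$ the fibre of $Z-Z$ over $v$ contains $Z_{i_v}-Z_{j_v}$ for any single chosen representation, the fibres are disjoint and avoid $\mathcal{R}$, and summing the bound $|Z_i|+|Z_j|-1$ over a system of representatives does reduce the lemma to your combinatorial assignment claim (indeed with the better error term $m(m-1)$). Your base case $m=u+1$ is also correct. The problem is that the assignment claim with minimum degree $2(2u+1-m)$ is essentially the entire content of the lemma, and your inductive step does not prove it. After deleting $u_m$ and extending the inductive assignment, the degree of $u_m$ equals twice the number of indices $j$ with $u_m-u_j\notin U'-U'$, and you need this count to be at least $2u+1-m$, which can be as large as $u-1$. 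Your justification --- take $u_m$ extremal for a generic functional, note that the difference to the opposite extremal vertex is new, and ``leverage multiple such extremal directions'' --- only produces one new direction per functional and gives no mechanism for accumulating $2u+1-m$ of them at a \emph{single} point $u_m$ that simultaneously satisfies $\dim(U\setminus\{u_m\})=u$. This is precisely where the difficulty lies, and you have deferred it rather than resolved it.

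For comparison, the paper's proof of Lemma~\ref{highplan} handles exactly this point with additional structure: it takes a facet $F_1$ of the convex hull of $U$ and the farthest parallel translate $F_2$, so that $U_1-U_2$, $U_2-U_1$ and $V_2-V_2$ are pairwise disjoint; it then fixes $u$ \emph{affinely independent} points $W\subseteq U_1$ and shows that for each $z\in U_2$ at most $|U_2|-1\le m-1-u$ of the differences $z-w_1,\dots,z-w_u$ can coincide with differences in $U_{2,z}-W$, because a double coincidence would force a nontrivial affine relation among the $w_i$. That affine-independence counting trick is what converts ``at least one new direction'' into ``at least $2u+1-m$ new directions,'' and some version of it (together with the facet separation that confines possible coincidences) is needed to complete your induction. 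As written, your argument establishes the claim only when $2u+1-m=1$ (i.e.\ $m=2u$), so there is a genuine gap for all $u+1\le m\le 2u-1$.
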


This lemma roughly states that high dimensional difference sets can not lie too much in a lower dimensional subspace. This will perform a key role in our proof of Theorem $\ref{diff}$. Moreover, the methods we introduce to prove Lemma $\ref{highplan}$ are similar to the ideas we use to tackle Theorem $\ref{diff}$. 
\par

For ease of exposition, given $X \subseteq \mathbb{R}^d$, we write
\[ \psi(X) = \{ \psi(x) \ | \ x \in X \}. \]
Similarly, for any $Y \subseteq \mathbb{R}^{u}$, we denote $Y^{\psi} \subseteq \mathbb{R}^d$ to be
\[ Y^{\psi} = \{ x \in Z \cap \mathbb{R}^d \ | \ \psi(x) \in Y\}. \]
Thus, we see that
\[ \psi(Z) = U \ \text{and} \ U^{\psi} = Z. \]
Hence, it suffices to show that
\[  |(U^{\psi}-U^{\psi})\setminus \mathcal{R}| \geq 2(2u + 1- m)|U^{\psi}| - 16(u^2 + m^2). \]
\par

\begin{proof}[Proof of Lemma $\ref{highplan}$]
We prove our lemma by induction. Let $Q(m,u)$ be the statement of Lemma $\ref{highplan}$ for $m,u \in \mathbb{N}$ satisfying $u+1 \leq m \leq 2u$. Our base case is when $m = u+1$, that is, we begin with showing that $Q(u+1,u)$ holds. For our inductive step, we assume that $u+2 \leq m \leq 2u$. We then show that $Q(m,u)$ holds whenever $Q(m',u)$ and $Q(m'',u-1)$ hold for all $u+1 \leq m' < m$ and $u \leq m'' < \min(m,2u-1)$. We thus prove $Q(m,u)$ holds for all $m,u \in \mathbb{N}$ satisfying $u+1 \leq m \leq 2u$. 
\par

We now consider our base case, that is, when $U$ consists of $u+1$ affinely independent points. In this case,  we see that if $z_i, z_j, z_k, z_l \in U$ satisfy
\[ z_i - z_j = z_k - z_l,\]
then we have either $i = j$ or $i=k$. Thus $z_i - z_j$ is distinct for all $i \neq j$. This implies that the sets $z_i^{\psi} - z_j^{\psi}$ and $z_k^{\psi} - z_l^{\psi}$ are pairwise disjoint whenever $i \neq j,k$. Moreover, as each $z_i - z_j$ is non-zero, the set $z_i^{\psi} - z_j^{\psi}$ does not intersect $\mathcal{R}$. Hence, we have
\begin{align*}
 |(U^{\psi} - U^{\psi})\setminus \mathcal{R} | 
 & \geq \sum_{i \neq j} |z_i^{\psi} - z_j^{\psi}| \geq \sum_{i \neq j} (|z_i^{\psi}| + |z_j^{\psi}|- 1) \\
 & \geq 2u \sum_{i =1}^{u+1} |z_i^{\psi}| - (u+1)^2  = 2u |U^{\psi}| - (u+1)^2. 
\end{align*} 
Thus we have resolved our base case. We now move to the inductive step. 
\par

Let $C_U$ be the convex hull of $U$, and $F_1$ be the $(u-1)$-dimensional affine subspace of $\mathbb{R}^{u}$ that contains a $(u-1)$-dimensional facet of $C_U$. We use $l''$ to denote the line in $\mathbb{R}^u$ that is orthogonal to $F_1$, and then we cover $U$ with translates of $F_1$. We use $\| .\|_u$ to denote the Euclidean norm in $\mathbb{R}^u$. We write $F_2$ to be the translate of $F_1$ such that $F_2 \cap U \neq \emptyset$ and ${\| (l'' \cap F_2) - (l'' \cap F_1) \|}_{u}$ is maximised. Lastly, we write $U_i = F_i \cap U$, and $V_i = U \setminus U_i$ for all $1 \leq i \leq 2$. 
\par

As $U_1$ is $(u-1)$-dimensional, it contains at least $u$ affinely independent points $w_1, \dots, w_{u}$. We write $W = \{w_1,\dots, w_{u}\}$. We begin by claiming that 
\begin{equation} \label{cor7}
 |U_1^{\psi} - U_2^{\psi}|   =  |U_2^{\psi} - U_1^{\psi}| \geq  |y^{\psi}  - W^{\psi}|  + (2u+1-m)(|U_2^{\psi}| - |y^{\psi}|),
\end{equation}
where $y$ is some element of $U_2$. If $|U_2| = 1$, then this is trivial. Thus we can assume that $|U_2| \geq 2$. For a fixed $z \in U_2$, we write $U_{2,z} = U_2  \setminus \{z\}$. We consider the differences
\begin{equation} \label{wut}
 z - w_1, z- w_2, \dots, z- w_u. 
 \end{equation}
We see that our choice of $F_2$ implies that $U_1 - U_2$, $U_2 - U_1$ and $V_2 - V_2$ are pairwise disjoint. Consequently, as the differences in $\eqref{wut}$ lie in $U_2- U_1$, if any of them coincide with differences from $U-U$, they must be from $U_2 - U_1$. Moreover, if $z, z'$ are distinct elements of $U_2$ and $w_i,w_j,w_k,w_l$ lie in $W$ such that
\[ z- w_i = z' - w_j \ \text{and} \ z - w_k = z' - w_l, \]
then we have
\[ w_i - w_j = w_k - w_l. \]
Combining this with the fact that $w_1, \dots, w_{u}$ are affinely independent, we get
\[ w_i = w_k \ \text{and} \ w_j = w_l.\]
Thus 
\[| \{ z - w_1, \dots, z- w_u \} \cap (U_{2,z} - W)| \leq |U_{2,z}| \leq  m - 1 - u.\]
This implies that there are at least $u - (m-1-u) = 2u - (m-1)$ differences in $\eqref{wut}$, say $z - w_{1}, \dots, z - w_{2u+1-m}$ that are not contained in $(U_{2,z}-W)$. Thus, we have
 \begin{align*}
  |W^{\psi} - U_2^{\psi}|  =  |U_2^{\psi} - W^{\psi}| 
 & \geq  |U_{2,z}^{\psi}  - W^{\psi}| +  \sum_{i=1}^{2u+1 - m} |z^{\psi} - w_{i}^{\psi}|   \nonumber \\
 & \geq  |U_{2,z}^{\psi}  - W^{\psi}|  + (2u+1-m)|z^{\psi}|.
 \end{align*}
We iterate this argument for $U_{2,z}$ in place of $U_2$ till we get
\[ |U_1^{\psi} - U_2^{\psi}|   =  |U_2^{\psi} - U_1^{\psi}| \geq  |y^{\psi}  - W^{\psi}|  + (2u+1-m)(|U_2^{\psi}| - |y^{\psi}|), \]
where $y$ is some element of $U_2$. Thus we have proven $\eqref{cor7}$. 
\par
 
Let $U_1 = \{ z_1, z_2, \dots, z_q \}$, where $u \leq q \leq m-1$. We see that 
\[ y - z_1, y - z_2, \dots, y - z_q\]
are pairwise distinct. Hence, the sets
\[y^{\psi} - z_1^{\psi}, y^{\psi} - z_2^{\psi}, \dots, y^{\psi}- z_q^{\psi} \]
are pairwise disjoint. We use $\eqref{triv}$ to see that
\[ |y^{\psi} - z_i^{\psi}| \geq |y^{\psi}| + |z_i^{\psi}| - 1, \]
for each $1 \leq i \leq m-1$. Consequently, we have
\begin{align} \label{fj2}
 |y^{\psi} - U_1^{\psi}| & = \sum_{i=1}^{q}|y^{\psi} - z_i^{\psi}|   \geq q|y^{\psi}| + \sum_{i=1}^{q}|z_i^{\psi}| - q  = q|y^{\psi}| + |U_1^{\psi}| - q.
 \end{align}
 \par
Similarly, we have
\[ |y^{\psi}-W^{\psi}| = \sum_{i=1}^{u} |y^{\psi} - w_i^{\psi}| \geq u |y^{\psi}| \geq (2u + 1 - m)|y^{\psi}|. \]
This combines with $\eqref{cor7}$ to give
\begin{equation} \label{wut7}
 |U_1^{\psi} - U_2^{\psi}|   =  |U_2^{\psi} - U_1^{\psi}| \geq (2u+1-m) |U_2^{\psi}|.
 \end{equation}
 \par
 
As before, we note that $U_1 - U_2$, $U_2 - U_1$ and $V_2 - V_2$ are pairwise disjoint, and each difference in $U_1 - U_2$ is non-zero. Thus, we get the following decomposition 
\begin{align} \label{wwymuym}
|(U^{\psi} - U^{\psi})\setminus \mathcal{R} | 
& \geq  |U_2^{\psi} - U_1^{\psi}| + |U_1^{\psi} - U_2^{\psi}| +  |(V_2^{\psi} - V_2^{\psi})\setminus \mathcal{R} | \nonumber \\
& = 2|U_2^{\psi} - U_1^{\psi}| +  |(V_2^{\psi} - V_2^{\psi})\setminus \mathcal{R} |.
\end{align}
\par

We now divide our proof into two cases, depending on the dimension of $V_2$.   First, we assume that $\dim(V_2) = u$, or equivalently, $U_1 \subsetneq V_2$. As $\dim(V_2) = u$, there must be at least $u+1$ elements in $V_2$. Moreover, $|V_2| < |U| \leq 2u$, and thus, we can apply $Q(|V_2|,u)$ to get 
\[ |(V_2^{\psi} - V_2^{\psi})\setminus \mathcal{R} |  \geq  2(2u+1-|V_2|) |V_2^{\psi}| - 16 (u^2+ |V_2|^2) . \]
We combine this with $\eqref{wut7}$ and $\eqref{wwymuym}$ to deduce that
\begin{align*}
|(U^{\psi} - U^{\psi})\setminus & \mathcal{R} | 
 \geq  2|U_2^{\psi} - U_1^{\psi}| +  |(V_2^{\psi} - V_2^{\psi})\setminus \mathcal{R} | \nonumber \\
& \geq 2(2u+1-m)|U_2^{\psi}|+  2(2u+1-|V_2|) |V_2^{\psi}| - 16 (u^2+ |V_2|^2)  \\
& \geq 2(2u+1-m)|U_2^{\psi}| + 2(2u + 1 - m)|V_2^{\psi}| - 16 (u^2+ m^2) \\
& = 2(2u+1-m)|U^{\psi}|  - 16 (u^2+ m^2).
\end{align*}
Thus we are done when $\dim(V_2) = u$.
\par

We now focus on the case when $\dim(V_2) = u-1$, that is, when $V_2 = U_1$. We note that $\dim(U_1) + 1 = u \leq q$. Thus if $q \leq m-2 \leq 2(u-1) = 2\dim(U_1)$, we can use the induction hypothesis $Q(|U_1|,u-1)$ to show that
\[  |(U_1^{\psi} - U_1^{\psi})\setminus \mathcal{R} | 
\geq  2(2u + 1 - m)|U_1^{\psi}| - 16( (u-1)^2 + (m-1)^2) . \]
We combine this with $\eqref{wut7}$  and $\eqref{wwymuym}$ to get
 \begin{align*}
  |(U^{\psi} - U^{\psi})\setminus \mathcal{R} |   & \geq  2(2u+1-m)|U^{\psi}| - 16( (u-1)^2 + (m-1)^2) \\
  & \geq 2(2u+1-m)|U^{\psi}| - 16(u^2 + m^2),
 \end{align*}
which is the desired bound.
\par

 We now assume $q= m-1$. This implies that $|U_2| =1$, say, $U_2 = \{y\}$. If $q=m-1 \leq 2u-2= 2\dim(U_1)$, we can use the induction hypothesis $Q(|U_1|,u-1)$ to show that
\[  |(U_1^{\psi} - U_1^{\psi})\setminus \mathcal{R} | 
\geq  2(2u - m)|U_1^{\psi}| - 16( (u-1)^2 + (m-1)^2) . \]
We combine this with $\eqref{fj2}$  and $\eqref{wwymuym}$ to get
 \begin{align*}
  |(U^{\psi} - U^{\psi})\setminus \mathcal{R} |   & \geq  2(2u+1-m)|U^{\psi}|- 2q  - 16( (u-1)^2 + (m-1)^2) \\
  & \geq 2(2u+1-m)|U^{\psi}| - 16(u^2 + m^2),
 \end{align*}
which is the desired conclusion.  Moreover, if $q= m-1 = 2u-1$, then using $\eqref{fj2}$ and $\eqref{wwymuym}$, we see that
 \begin{align*} 
 |(U^{\psi} - U^{\psi})\setminus & \mathcal{R} |  
 \geq |U_2^{\psi} - U_1^{\psi}| + |U_1^{\psi} - U_2^{\psi}|   \geq   2q|U_2^{\psi}|+ 2|U_1^{\psi}| - 2q \\
 & \geq 2(2u+1-m)|U_2^{\psi}| + 2(2u +1-m)|U_1^{\psi}| - 2(m-1) \\
 & = 2(2u+1-m)|U^{\psi}| - 2(m-1),
 \end{align*}
in which case, we are also done. Thus, we finish the inductive step and conclude the proof of Lemma $\ref{highplan}$.
\end{proof}

%---------------------------------------------------------------------------------------------------------------------------
%---------------------------------------------------------------------------------------------------------------------------
%---------------------------------------------------------------------------------------------------------------------------
%---------------------------------------------------------------------------------------------------------------------------
%---------------------------------------------------------------------------------------------------------------------------
%---------------------------------------------------------------------------------------------------------------------------

\section{Proof of Theorem $\ref{diff}$}

Let $A \subseteq \mathbb{R}^d$ be a finite, non-empty set such that $\dim(A) = d$ and $|A| = n$ where $n$ is a large enough natural number. We assume that
\[ |A-A| \leq 8(d-1)|A|.\] 
We use Lemma $\ref{ruzcal}$ with $U= V = A$ to show that
\[ |A+A| \leq |A-A|^3 |A|^{-2} \leq (8d-8)^3 |A|. \]
We now apply Lemma $\ref{fri}$ with $c = (8d-8)^3$ to get parallel lines $l_1, l_2, \dots, l_r$ in $\mathbb{R}^d$, and constants $0 < \sigma \leq 1/2$ and $C_1 > 0$ depending only on $d$ such that 
\begin{equation} \label{ld}
 |A \cap l_1| \geq \dots \geq  |A \cap l_r| \geq  |A \cap l_1|^{1/2} \geq C_1^{-1} n^{\sigma}. 
 \end{equation}
and 
\[ |A\setminus (l_1 \cup l_2 \cup \dots \cup l_r)| < C_1 c n^{1-\sigma}. \]
We write 
\[ S = A \cap (l_1 \cup l_2 \cup \dots \cup l_r), \ \text{and} \ E = A \setminus S. \]
We note that $\eqref{ld}$ implies the bound
\[ |A| \geq |S| = \sum_{i=1}^{r} |A \cap l_i| \geq r C_1^{-1} |A|^{\sigma}.\]
Thus, we have
\begin{equation} \label{ubr}
 r \leq C_1 |A|^{1-\sigma}.
 \end{equation}
\par

We first show that it suffices to prove Theorem $\ref{diff}$ for $S$. Thus, we assume that Theorem $\ref{diff}$ holds for the set $S$. If $\dim(S) = d$, then we have
\begin{align*}
|A-A|  &  \geq |S-S| \geq (2d-2)|S| - O_d(|S|^{1- \delta}) \\
& \geq (2d-2)(|A| - |E|) - O_d(|A|^{1-\delta}) \\
& \geq (2d-2)|A| - O_d(|A|^{1 - \sigma}) + O_d(|A|^{1-\delta}) \\
& = (2d-2)|A| - O_d(|A|^{1- \min{(\sigma, \delta)}}).
\end{align*}
As both $\delta$ and $\sigma$ are strictly positive constants that only depend on $d$, we see that $\min{(\sigma, \delta)}$ is also a strictly positive constant depending only on $d$, and consequently, our claim is proved when $\dim(S) =d$.
\par

Furthermore, if $\dim(S) = d_1 < d$, then there exist linearly independent elements $a_1, \dots, a_{d-d_1} \in E$ such that $\dim(S \cup \{a_1, \dots, a_{d-d_1} \}) = d$. This also implies that $a_1, \dots, a_{d-d_1}$ lie outside the affine span of $S$. Thus, the sets 
\[ S - S, S- a_1, \dots, S-a_{d-d_1}, a_1 - S, \dots, a_{d-d_1} - S \]
are pairwise disjoint. Consequently, we have
\begin{align*}
 |A-A|  &  \geq |S-S| + \sum_{i=1}^{d-d_1} (|S- a_i| + |a_i - S|) \\
 & \geq (2d_1-2)|S|  - O_d(|S|^{1- \delta})  + \sum_{i=1}^{d-d_1} 2|S|\\
 & = (2d - 2) |S| - O_d(|S|^{1-\delta}) \\
 & \geq (2d-2)|A| -  O_d(|A|^{1- \min{(\sigma, \delta)}}).
\end{align*}
\par
As before, we see that $\min{(\sigma, \delta)}$ is a strictly positive constant depending only on $d$, and hence, our claim is proved. Thus, we will now prove a variant of Theorem $\ref{diff}$ for sets contained in a union of parallel lines. 

\begin{Proposition} \label{lines}
Let $d$ be a natural number and let $l_1, l_2, \dots, l_r$ be $r$ parallel lines in $\mathbb{R}^d$. Suppose $A$ is a finite, non-empty, $d$-dimensional subset of $\mathbb{R}^d$ such that 
\[ A \subseteq l_1 \cup l_2 \cup \dots \cup l_r \ \text{and} \ |A \cap l_i| \geq 2d^2  \ (1 \leq i \leq r). \]
Then we have 
\[ |A-A| \geq (2d-2)|A| - K_d r,\]
where $K_d = 1000d^3$.
\end{Proposition}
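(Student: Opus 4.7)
My plan is to proceed by induction on $r$, with the base case $r = d$ handled directly. Let $\mathcal{L}$ denote the common direction of the parallel lines and let $\pi : \mathbb{R}^d \to \mathbb{R}^{d-1}$ be the quotient projection; writing $P = \pi(A)$, $A_p = A \cap \pi^{-1}(p)$, and $n_p = |A_p|$ for $p \in P$, the $d$-dimensionality of $A$ gives $|P| = r \geq d$ and $\dim P = d - 1$. In the base case $r = d$, the points of $P$ are affinely independent, which implies that no nontrivial equation $p_1 - p_2 = p_3 - p_4$ holds among distinct points of $P$; hence each nonzero $v \in P - P$ corresponds to a unique ordered pair $(i, j)$ with $\pi(l_i) - \pi(l_j) = v$. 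Applying the trivial sumset bound $|A_i - A_j| \geq n_i + n_j - 1$ to each slice $(A-A) \cap \pi^{-1}(v)$, together with $|A_1 - A_1| \geq 2 n_1 - 1$ for $v = 0$, yields
\[ |A - A| \geq (2n_1 - 1) + \sum_{i \neq j}(n_i + n_j - 1) \geq (2d-2)|A| - d^2, \]
which is stronger than the claim since $d^2 \leq K_d d$.

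For the inductive step $r > d$, I adapt the facet-based strategy from the proof of Lemma \ref{highplan}. Choose a $(d-2)$-dimensional affine subspace $F_1 \subseteq \mathbb{R}^{d-1}$ containing a facet of $\mathrm{conv}(P)$, and let $F_2$ be the farthest parallel translate of $F_1$ still meeting $P$. Set $U_1 = P \cap F_1$, $U_2 = P \cap F_2$, and $V_2 = P \setminus U_2$, and write $A_X = \bigcup_{p \in X} A_p$. The extremal choice of $F_1, F_2$ ensures that the sets $U_1 - U_2$, $U_2 - U_1$, and $V_2 - V_2$ are pairwise disjoint in $\mathbb{R}^{d-1}$, so their $\pi$-preimages are disjoint subsets of $A - A$, yielding
\[ |A - A| \geq 2 |A_{U_1} - A_{U_2}| + |A_{V_2} - A_{V_2}|. \]
In the generic subcase $\dim V_2 = d - 1$ (at least three parallel layers in $P$), the induction hypothesis applies to $A_{V_2}$, which sits in $|V_2| = r - |U_2| < r$ parallel lines each with at least $2d^2$ points, giving $|A_{V_2} - A_{V_2}| \geq (2d-2)|A_{V_2}| - K_d(r - |U_2|)$. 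The degenerate subcase $V_2 = U_1$, where $P \subseteq F_1 \cup F_2$ and $A$ lies in two parallel hyperplanes of $\mathbb{R}^d$, I would handle by a secondary induction on $d$: apply the proposition recursively in dimension $d - 1$ to the $(d-1)$-dimensional piece $A_{U_1}$.

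To close the induction, the cross term must satisfy
\[ |A_{U_1} - A_{U_2}| \geq (d-1)|A_{U_2}| + |A_{U_1}| - O(d^2 |U_2|). \]
Since $U_1 \subseteq F_1$ spans the $(d-2)$-dimensional subspace $F_1$, it contains $d-1$ affinely independent points $w_1, \ldots, w_{d-1}$. For each fixed $q \in U_2$, the $d-1$ differences $w_i - q$ are pairwise distinct, and the corresponding 1-dimensional sets $A_{w_i} - A_q$ each have size at least $n_{w_i} + n_q - 1 \geq 4d^2 - 1$. The main technical obstacle is tracking coincidences as $q$ varies: if $w_i - q = w_{i'} - q'$ with $q \neq q'$, then $q - q' = w_{i'} - w_i$, and affine independence of the $w_i$'s forces at most one such solution $(i, i')$ per ordered pair $(q, q')$. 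Following the iterative argument leading to $\eqref{cor7}$ in Lemma \ref{highplan}, I would peel off elements of $U_2$ one at a time, at each step applying the trivial one-dimensional bound on the at-least $(d-1) - (|U_2| - 1)$ genuinely new lines and subtracting the bounded number of collisions. Summing carefully gives the cross-term bound with accumulated error of order $d^2|U_2|$, which substituted into the displayed inequality yields $|A - A| \geq (2d-2)|A| - K_d r$ for $K_d = 1000 d^3$, the cubic dependence arising from compounding the $O(d^2)$ per-line loss with a further factor of $d$ from the secondary dimension induction. The principal difficulty throughout is uniformly managing the two-hyperplane subcase and ensuring that the constants from the nested inductions fit within $K_d$.
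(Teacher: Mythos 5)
Your overall architecture (facet of the projected convex hull, farthest parallel translate, disjointness of $U_1-U_2$, $U_2-U_1$, $V_2-V_2$, induction on $r$ with a secondary induction on $d$ for the two-layer case) matches the paper's, and your base case is correct. The gap is the cross-term bound
\[
|A_{U_1}-A_{U_2}| \;\geq\; (d-1)|A_{U_2}| + |A_{U_1}| - O(d^2|U_2|),
\]
which is false in general. The peeling argument you import from Lemma \ref{highplan} yields, for each $q\in U_2$, at least $(d-1)-(|U_2|-1)$ ``genuinely new'' differences $w_i-q$; this count is positive only when $|U_2|\leq d-1$, which is exactly the regime of Lemma \ref{highplan} (there $|U_2|\leq m-u\leq u$), whereas in Proposition \ref{lines} the far layer $U_2$ can be arbitrarily large. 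Concretely, take $d=4$, $U_1=\{w_1,w_2,w_3\}$ affinely independent in $F_1$, and $U_2=\{q_0+a(w_1-w_2)+b(w_1-w_3):0\leq a,b<M\}\subseteq F_2$, with every fibre $A_p$ equal to the same length-$N$ progression in the line direction. Then $|U_1-U_2|\approx M^2$, every fibre of $A_{U_1}-A_{U_2}$ has exactly $2N-1$ points, so $|A_{U_1}-A_{U_2}|\approx 2NM^2\approx 2|A_{U_2}|$, well short of the required $3|A_{U_2}|$. Moreover, here $V_2=U_1$ is tiny, so your decomposition $|A-A|\geq 2|A_{U_1}-A_{U_2}|+|A_{V_2}-A_{V_2}|$ discards the dominant piece $A_{U_2}-A_{U_2}$ and cannot reach $(2d-2)|A|$.

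What actually closes the argument (and is the real content of the paper's \S 4) is a case split on which of $|A_{U_1}|$, $|A_{U_2}|$ is larger. When $|A_{U_1}|\geq|A_{U_2}|$, Lemma \ref{ruzsa} applied to the $(d-1)$-dimensional set $A_{U_1}$ gives precisely your cross-term bound with error $d^2$, and the induction closes as you describe. When $|A_{U_2}|>|A_{U_1}|$, Ruzsa's inequality only gives $|A_{U_2}|+(d-1)|A_{U_1}|$, with the factor $d-1$ attached to the wrong set; the paper then either interchanges the roles of the two extremal translates (possible when the complement of $U_1$ in the projection is full-dimensional or equals $U_2$), or, in the residual case, decomposes $U_1$ along cosets of the affine span of the complement of $U_1$ and invokes Lemma \ref{highplan} to bound from below the part of $A_{U_1}-A_{U_1}$ lying outside the preimage of that span. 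That last step is where Lemma \ref{highplan} genuinely enters, and none of it is recoverable from the peeling argument alone; as written, your proof does not establish the proposition.
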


We remark that Theorem $\ref{diff}$ follows from combining the preceding discussion with $\eqref{ld}$, $\eqref{ubr}$ and Proposition $\ref{lines}$.
\par

We now begin the proof of Proposition $\ref{lines}$. Our strategy will be to follow induction on the dimension $d$ and number of parallel lines $r$ that contain $A$.  Let $P(d,r)$ be the statement of Proposition $\ref{lines}$ for $d$-dimensional sets $A$ which can be covered by $r$ parallel lines. We note that as $\dim(A) = d$, the number of parallel lines $r$ containing $A$ must always be at least $d$. Thus, our base cases will be to prove $P(d,r)$ for all $r \geq d$ such that $d \in \{1,2,3\}$, and $P(d,d)$ for all $d \geq 4$. In our inductive step, for a given $d, r \in \mathbb{N}$ such that $r > d$ and $d \geq 4$, we will show that $P(d,r)$ holds if $P(k,r-1)$ holds for all $k \leq d$. We will, hence conclude that $P(d,r)$ holds for all $r \geq d \geq 3$.
\par

Thus, we begin with the case when $d \in \{1,2,3\}$. In this setting, we can use Lemma $\ref{ruzsa}$ to show that for each finite, non-empty subset $A$ of $\mathbb{R}^d$ such that $\dim(A) = d$, we have
\[|A-A| \geq (d+1)|A| - d(d+1)/2 \geq (2d-2)|A| - d^2. \] 
This finishes our first base case. Our second base case is when $r=d$, where we see that all the sets of the form $(A \cap l_i) - (A \cap l_j)$ are pairwise disjoint whenever $i \neq j$. Consequently, we have
\begin{align*}
|A-A| & \ \geq \ \sum_{i \neq j} |(A \cap l_i) - (A \cap l_j)| \ \geq \ \sum_{i \neq j} (|A \cap l_i| + |A \cap l_j| - 1) \\
        & \ \geq 2(d-1) \sum_{i=1}^{d} |A \cap l_i|  - d^2 \ = \  2(d-1) |A| - d^2 .
\end{align*}
We now move to the inductive step, which will be our primary focus in the next section.

%---------------------------------------------------------------------------------------------------------------------------
%---------------------------------------------------------------------------------------------------------------------------
%---------------------------------------------------------------------------------------------------------------------------
%---------------------------------------------------------------------------------------------------------------------------
%---------------------------------------------------------------------------------------------------------------------------
%---------------------------------------------------------------------------------------------------------------------------

\section{The Inductive Step}

Let $r,d$ be natural numbers such that $r > d \geq 4$. As previously mentioned, we assume that $P(k,r-1)$ holds for all $k \leq d$. Let $H$ be the hyperplane that is orthogonal to $l_1$. For each $1 \leq i \leq r$, we write $x_i$ to be the point where $H$ and $l_i$ intersect, and we let $X = \{ x_1, \dots, x_r \}$. As $\dim(A) = d$, we see that $\dim(X) = d-1.$ Moreover, we denote $\pi$ to be the projection map from $\mathbb{R}^d$ to $H$. For any $Y \subseteq H$, we let $Y^{\pi}$ be a subset of $\mathbb{R}^d$ such that
\[Y^{\pi} = \{ x \in A \ | \ \pi(x) \in Y \}. \]
Thus $Y^{\pi}$ is the pre-image of $Y$ under $\pi$ in $A$. Because we are projecting along the direction of $l_1$ and $|A \cap l_i| \geq 2$ for all $1 \leq i \leq r$, we have $\dim(Y^{\pi}) = \dim(Y) + 1,$ for all $Y \subseteq \pi(A) := \{ \pi(a) \ | \ a \in A \} $. 
\par

We will use ${\|.\|}_d$ to denote the Euclidean norm in $\mathbb{R}^d$. As $H$ is a $(d-1)$-dimensional subspace of $\mathbb{R}^d$, we can find an invertible linear map $\phi$ from $H$ to $\mathbb{R}^{d-1}$. Fixing such a $\phi$, we can induce a norm ${\|.\|}_H$ on $H$ by writing ${\|x\|}_H = {\|\phi(x) \|}_{d-1}$, for all $x \in H$. 
\par

We now consider the convex hull $C$ of $X$. As $\dim(C) = \dim(X) = d-1$, we note that $C$ must have a $(d-2)$-dimensional facet. We denote $H_1$ to be the affine span of this $(d-2)$-dimensional facet and write $l'$ to be the line in $H$ that is orthogonal to $H_1$. 
\par

We cover $X$ with translates of $H_1$ and denote $H_2$ to be the translate of $H_1$ such that $H_2 \cap X \neq \emptyset$ and ${\| (l' \cap H_2) - (l' \cap H_1) \|}_{H}$ is maximised. The existence and uniqueness of such an $H_2$ is confirmed by the fact that $H_1$ is a $(d-2)$-dimensional subspace of $H$, that contains a $(d-2)$-dimensional facet of $C$ and $l'$ is chosen to be orthogonal to $H_1$. 
Thus for all translates $H'$ of $H_1$ such that $H' \cap X \neq \emptyset$ and $H' \neq H_2$, we have
\begin{equation}  \label{far}
{\| (l' \cap H_2) - (l' \cap H_1) \|}_{H} > {\| (l' \cap H') - (l' \cap H_1) \|}_{H}.
\end{equation}
Lastly, for ease of notation, we write $X_i = H_i \cap X$ and $Y_i = X \setminus X_i$ for $i=1,2$.
\par

Our strategy essentially involves analysing how $X_1$, $X_2, Y_1$ and $Y_2$ interact with each other. We divide our proofs into some sub-cases. 

\subsection{}
We first assume that $|X_1^{\pi}| \geq |X_2^{\pi}|$. In this case, we translate our set $A$, and thus $X$, so that $0 \in H_2$. From $\eqref{far}$, we deduce that $X_2 - X_1, X_1 - X_2$ and $Y_2 - Y_2$ are pairwise disjoint. Thus, $X_2^{\pi} - X_1^{\pi}, X_1^{\pi} - X_2^{\pi}$ and $Y_2^{\pi} - Y_2^{\pi}$ are pairwise disjoint. We recall that $\dim(X_1^{\pi}) = \dim(X_1) + 1 = d-1$ and thus, we apply Lemma $\ref{ruzsa}$ to see that
\begin{equation} \label{ind1}
| X_1^{\pi} - X_2^{\pi}| = | X_2^{\pi} - X_1^{\pi}| \geq |X_1^{\pi}| + (d-1)|X_2^{\pi}| - d^2.
\end{equation}

\subsubsection{}
If  $Y_2 = X_1$, we have $\dim(Y_2) = d-2$ and consequently, $\dim(Y_2^{\pi}) = d-1$. Thus, by $P(d-1,r-1)$, we have
\begin{equation} \label{ca1}
 |Y_2^{\pi} - Y_2^{\pi}| \geq 2(d-2)|Y_2^{\pi}| - K_d (r-1).
 \end{equation}
We combine $\eqref{ind1}$ and $\eqref{ca1}$ to get
\begin{align*}
|A-A| & \geq | X_1^{\pi} - X_2^{\pi}| + | X_2^{\pi} - X_1^{\pi}| + |Y_2^{\pi} - Y_2^{\pi}| \\
        & \geq 2(d-2 + 1)|Y_2^{\pi}| + 2(d-1)|X_2^{\pi}| - 2d^2 - K_{d-1}(r-1) \\
        & = (2d-2)|A| - 2d^2 - K_{d-1}(r-1) \\
        & \geq (2d-2)|A| - K_d r.
\end{align*} 

\subsubsection{}
If $Y_2 \neq X_1$, then $\dim(Y_2) = d-1$, which in turn implies that $\dim(Y_2^{\pi}) = d$. Thus, by $P(d,r-1)$, we have
\begin{equation} \label{ca2}
 |Y_2^{\pi} - Y_2^{\pi}| \geq 2(d-1)|Y_2^{\pi}| - K_d (r-1).
 \end{equation}
We now combine $\eqref{ind1}$ and $\eqref{ca2}$ to get
\begin{align*}
|A-A| & \geq  | X_1^{\pi} - X_2^{\pi}| + | X_2^{\pi} - X_1^{\pi}| + |Y_2^{\pi} - Y_2^{\pi}| \\
        & \geq (2d-2)|Y_2^{\pi}| + 2|X_1^{\pi}| + 2(d-1)|X_2^{\pi}| - 2d^2 - K_d (r-1) \\
        & \geq (2d-2)|A| - 2d^2 - K_d(r-1) \\
        & \geq (2d-2)|A| - K_d r.
\end{align*} 

\subsection{}
We now assume that $|X_1^{\pi}| < |X_2^{\pi}|$. In this case, we have
\begin{equation} \label{ind2}
| X_1^{\pi} - X_2^{\pi}| = | X_2^{\pi} - X_1^{\pi}| \geq |X_2^{\pi}| + (d-1)|X_1^{\pi}| - d^2, 
\end{equation} 
instead of $\eqref{ind1}$. Thus, we divide our strategy into different cases depending on $\dim(Y_1)$. 
\par

If $\dim(Y_1) = d-1$, then we can interchange $H_1$ and $H_2$, (and thus, $X_1$ and $X_2$, and $Y_1$ and $Y_2$) and proceed as in $\S 4.1.2$. Furthermore, if $\dim(Y_1) = d-2$, and $Y_1 = X_2$, then we can again interchange $H_1$ and $H_2$, (and consequently, $X_1$ and $X_2$, and $Y_1$ and $Y_2$) and follow $\S4.1.1$. Thus, we can assume that the affine span of $Y_1$ is an affine hyperplane $P$ such that $\dim(P) = k \leq d-2$ and $P$ is not a translate of $H_1$. The rest of our proof will focus on this case.
\par

For ease of notation, we write $\mathcal{P} = \pi^{-1}(P)$ to be the $(k+1)$-dimensional affine subspace in $\mathbb{R}^d$ that is the pre-image of $P$ under $\pi$. As in the previous subcase, we can translate our set $A$ to ensure that $0$ is contained in $H_1$. We decompose $X_1$ into a union of subsets of cosets of $P$, that is, we write
\[ X_1 = Z_0 \cup Z_1 \cup \dots \cup Z_l,\]
where $Z_i$ lies in a distinct coset of $P$ for each $0 \leq i \leq l$, and $Z_0 \subseteq P$. 
 \par

We begin by claiming that the sets
\begin{align}
Z_1 - Y_1, Z_2 - Y_1, \dots, Z_l - Y_1, \label{r1} \\ 
Y_1 - Z_1, Y_1 - Z_2, \dots, Y_1 - Z_l , \label{r2} \\ 
(Y_1 \cup Z_0) - (Y_1 \cup Z_0),   \nonumber \\
 (X_1 - X_1) \setminus (P-P),  \nonumber
\end{align}
are pairwise disjoint. This is true because any two sets in $\eqref{r1}$ are pairwise disjoint as each $Z_i$ lies in a distinct coset of $P$. Moreover, any two sets of the form $Y_1 - Z_i$ and $Z_j - Y_1$ are pairwise disjoint as they lie on opposite sides of the hyperplane $H_1$. Next, we see that any set in $\eqref{r1}$ or $\eqref{r2}$ lies in $H\setminus (H_1 \cup (P-P))$ while $(Y_1 \cup Z_0) - (Y_1 \cup Z_0) \subseteq P-P$ and $((X_1 - X_1)\setminus  (P-P)) \subseteq H_1 \setminus (P-P)$. Thus our claim holds. 
\par

This implies that the sets
\begin{align*}
Z_1^{\pi} - Y_1^{\pi}, Z_2^{\pi} - Y_1^{\pi}, \dots, Z_l^{\pi} - Y_1^{\pi}, \\ 
Y_1^{\pi} - Z_1^{\pi}, Y_1^{\pi} - Z_2^{\pi}, \dots, Y_1^{\pi} - Z_l^{\pi},  \\ 
(Y_1 \cup Z_0)^{\pi} - (Y_1 \cup Z_0)^{\pi},    \\
 (X_1^{\pi} - X_1^{\pi}) \setminus (\mathcal{P}-\mathcal{P}),  
\end{align*}
are pairwise disjoint. Furthermore, we know that 
\[ |Y_1^{\pi}| \geq |X_2^{\pi}| > |X_1^{\pi}| \geq |Z_i^{\pi}|  \ \ (1 \leq i \leq l). \]
Thus, we use Lemma $\ref{ruzsa}$ along with the fact that $\dim(Y_1^{\pi}) = k+1$, to see that
\begin{align*} 
|Z_i^{\pi} - Y_1^{\pi}| + |Y_1^{\pi} - Z_i^{\pi}| 
& \geq 2|Y_1^{\pi}| + 2(k+1)|Z_i^{\pi}| - 2d^2.
\end{align*} 
Summing this for all $1 \leq i \leq l$, we get
\begin{equation} \label{p1}
\sum_{i=1}^{l} (|Z_i^{\pi} - Y_1^{\pi}| + |Y_1^{\pi} - Z_i^{\pi}| ) \geq 2l |Y_1^{\pi}| + 2(k+1)\sum_{i=1}^{l} |Z_i^{\pi}|-2ld^2. 
\end{equation} 
As $k \leq d-2$, we can use $P(k+1,r-1)$ to deduce that
\begin{align} 
|(Y_1 \cup Z_0)^{\pi} - (Y_1 \cup Z_0)^{\pi}| 
& \geq 2k |(Y_1 \cup Z_0)^{\pi}| - K_{k+1}(r-1) \nonumber \\
& = 2k|Y_1^{\pi}| + 2k|Z_0^{\pi}| - K_{k+1}(r-1)      \label{p2}
\end{align} 
Combining $\eqref{p1}$ and $\eqref{p2}$ with the fact that $A-A$ contains the pairwise disjoint sets $Z_1^{\pi} - Y_1^{\pi}, \dots,  (X_1^{\pi} - X_1^{\pi}) \setminus (\mathcal{P} - \mathcal{P})$, we get
\begin{equation} \label{res}
|A-A|  \geq  2(l+k)|Y_1^{\pi}| + 2k|X_1^{\pi}|  +  |(X_1^{\pi} - X_1^{\pi}) \setminus (\mathcal{P} - \mathcal{P})| + 2\sum_{i=1}^{l}|Z_i^{\pi}| - C, 
\end{equation}
where $C = K_{k+1}(r-1) + 2ld^2$. 
\par

Note that $l \geq d-1-k$, otherwise we could construct an affine subspace of dimension at most $d-2$ that contains $X$. This would contradict the fact that $\dim(X) = d-1$. Moreover, as $|Y_1^{\pi}| > |X_1^{\pi}|$, we see that
\begin{equation} \label{pff}
2(l+k)|Y_1^{\pi}| + 2k|X_1^{\pi}| \geq 2(d-1)|Y_1^{\pi}| + 2(l+2k -(d-1))|X_1^{\pi}|. 
\end{equation}
\par

We translate our set $A$ to ensure that $P$ is a proper subspace and we split our proof into two cases depending on whether $Z_0$ is empty or not. If $Z_0 = \emptyset$, then we have $\sum_{i=1}^{l} |Z_i^{\pi}| = |X_1^{\pi}|$. Thus, combining $\eqref{res}$ and $\eqref{pff}$, we get
\begin{equation} \label{was1}
 |A-A| \geq 2(d-1)|Y_1^{\pi}| + 2(l+2k -d+2)|X_1^{\pi}|+  |(X_1^{\pi} - X_1^{\pi}) \setminus (\mathcal{P} - \mathcal{P})| - C.
 \end{equation}
In this case, if $l+2k -d+2 \geq d-1$, then we are done since $|A| = |X_1^{\pi}| + |Y_1^{\pi}|$. Thus, we can assume that $l \leq 2(d-k-2)$.
\par

We now consider the projection maps $\psi : \mathbb{R}^d \to \mathbb{R}^d/(\mathcal{P}-\mathcal{P})$, and $\phi : H \to H/(P-P)$. We write
\[ U = \psi(X_1^{\pi}) = \{ \psi(x) \ | \ x \in X_1^{\pi} \}, \ \text{and} \ U' = \phi(X_1) = \{ \phi(x)  \ | \ x \in X_1 \}. \]
Since $\pi(\mathbb{R}^d) = H$ and $\pi(\mathcal{P} - \mathcal{P}) = P-P$ and $\pi(X_1^{\pi}) = X_1$, we see that $U$ and $U'$ are isomorphic sets, whence, $\dim(U) = \dim(U')$ and $|U| = |U'|$. We use $u$ to denote $\dim(U')$. Since $Z_0 = \emptyset$, we note that $|U| = l$. Moreover, since $Z_0 = \emptyset$ and $\dim(U) = u$, we see that $U$ is contained in a subspace of dimension $u+1$. This further implies that $u+1 + k \geq d-1$, as otherwise we could construct an affine subspace of dimension at most $d-2$ that contains $X$. 
\par

This can be summarised as follows 
\[ 2u \geq 2(d-2-k) \geq l = |U| \geq u+1. \]
Thus, we can apply Lemma $\ref{highplan}$ to get
\[ |(X_1^{\pi} - X_1^{\pi}) \setminus (\mathcal{P} - \mathcal{P})| \geq 2(2(d-2-k) +1 -l)|X_1^{\pi}| - 80d^2. \]
Combining this with $\eqref{was1}$, we find that
\[ |A-A| \geq 2(d-1)|Y_1^{\pi}| + 2(d-1)|X_1^{\pi}| - C - 80d^2 \geq 2(d-1) |A| - K_{d}r, \]
in which case, we are done.
\par

When $Z_0 \neq \emptyset$, we proceed similarly. In particular, we use $\eqref{res}$ and $\eqref{pff}$ to get
\begin{equation} \label{was2}
|A-A|  \geq  2(d-1)|Y_1^{\pi}| + 2(l+2k -(d-1))|X_1^{\pi}| +  |(X_1^{\pi} - X_1^{\pi}) \setminus (\mathcal{P} - \mathcal{P})|  - C.
\end{equation}
As before, if $l + 2k - (d-1) \geq d-1$, we would be done, whence we can assume that $l +1 \leq 2(d-1-k)$. Next, we consider the projection maps $\psi$ and $\phi$, and the projected sets $U$ and $U'$ as defined earlier. Since $Z_0 \neq \emptyset$, we have $|U| = l+1$. Furthermore, as $Z_0 \neq \emptyset$ and $\dim(U) = u$, we see that $U$ is contained in a subspace of dimension $u$. Consequently, we have $u + k \geq d-1$, otherwise we could construct an affine subspace of dimension at most $d-2$ that contains $X$. Summarising this as before, we have
\[ 2u \geq 2(d-1-k) \geq l+1 = |U| \geq u+1.\]
Thus, we can apply lemma $\ref{highplan}$ to deduce that 
\[ |(X_1^{\pi} - X_1^{\pi}) \setminus (\mathcal{P} - \mathcal{P})| \geq 2(2(d-1-k) +1 -(l+1))|X_1^{\pi}| - 80d^2. \]
Putting this together with $\eqref{was2}$, we get 
\[ |A-A| \geq  2(d-1)|Y_1^{\pi}| + 2(d-1)|X_1^{\pi}| - C - 80d^2 \geq 2(d-1) |A| - K_{d}r,\]
which is the desired bound. Thus, we finish the inductive step and conclude the proof of Proposition $\ref{lines}$.

\bibliographystyle{amsbracket}
\providecommand{\bysame}{\leavevmode\hbox to3em{\hrulefill}\thinspace}

\end{document}